\newtheorem{thm}{Theorem}
\newtheorem{prop}[thm]{Proposition}
\newtheorem{lem}[thm]{Lemma}
\theoremstyle{remark}
\newcommand{\FF}{\mathbb{F}}
\newcommand{\0}{\mathbf{0}}
\newcommand{\1}{\mathbf{1}}
\newcommand{\ww}{\omega}
\newcommand{\vv}{\omega^2}
\newcommand{\cC}{\mathcal{C}}
\DeclareMathOperator{\wt}{wt}
\begin{document}
\title{Some optimal 
entanglement-assisted quantum codes
constructed from 
quaternary Hermitian linear complementary dual codes
}

\author{
Masaaki Harada\thanks{
Research Center for Pure and Applied Mathematics,
Graduate School of Information Sciences,
Tohoku University, Sendai 980--8579, Japan.
email: {\tt mharada@tohoku.ac.jp}.}
}
\date{}

\maketitle

\begin{abstract}
We establish the existence of
 optimal
 entanglement-assisted quantum 
$[[n,k,d;n-k]]_2$ codes 
for 
$(n,k,d)=(14,6,7)$,
$(15,7,7)$,
$(17,6,9)$,  
$(17,7,8)$, 
$(19,7,9)$ and 
$(20,7,10)$.
These codes are obtained from
quaternary Hermitian  linear complementary dual codes.
We also give some observation on the largest minimum weights.
\end{abstract}

\section{Introduction}

Let $\FF_q$ denote the finite field of order $q$,
where $q$ is a prime power.
In this note, an $[n,k]_q$ code means
a code over $\FF_q$ of length $n$ and dimension $k$.
The {\em Euclidean dual} code $C^{\perp}$ of an $[n,k]_q$ code $C$ 
is defined as
$
C^{\perp}=
\{x \in \FF_q^n \mid \langle x,y\rangle = 0 \text{ for all } y \in C\},
$
where $\langle x,y\rangle = \sum_{i=1}^{n} x_i {y_i}$
for $x=(x_1,\ldots,x_n), y=(y_1,\ldots,y_n) \in \FF_q^n$.
For any $x \in \FF_{q^2}$, the conjugation of $x$ is
defined as $\overline{x}=x^q$.
The {\em Hermitian dual} code $C^{\perp_H}$ of an $[n,k]_{q^2}$ code $C$ 
is defined as
$
C^{\perp_H}=
\{x \in \FF_{q^2}^n \mid \langle x,y\rangle_H = 0 \text{ for all } y \in C\},
$
where $\langle x,y\rangle_H= \sum_{i=1}^{n} x_i \overline{y_i}$
for $x=(x_1,\ldots,x_n), y=(y_1,\ldots,y_n) \in \FF_{q^2}^n$.
Let $\0_n$ denote the zero vector of length $n$.
A code $C$ over $\FF_q$ is called 
{\em Euclidean linear complementary dual}
if $C \cap C^\perp = \{\0_n\}$.
A code $C$ over $\FF_{q^2}$ is called 
{\em Hermitian linear complementary dual}
if $C \cap C^{\perp_H} = \{\0_n\}$.
These two families of codes are collectively called
{\em linear complementary dual} (LCD for short) codes.

LCD codes were introduced by Massey~\cite{Massey} and gave an optimum linear
coding solution for the two user binary adder channel.
Recently, much work has been done concerning LCD codes
for both theoretical and practical reasons.
In particular, Carlet, Mesnager, Tang, Qi and Pellikaan~\cite{CMTQ2}
showed that 
any code over $\FF_q$ is equivalent to some Euclidean LCD code
for $q \ge 4$ and
any code over $\FF_{q^2}$ is equivalent to some Hermitian LCD code
for $q \ge 3$.
This motivates us to study Euclidean LCD codes over $\FF_q$
$(q=2,3)$ and quaternary Hermitian LCD codes.
Here, we consider only the latter.
In addition, it is known that quaternary Hermitian LCD codes 
give 
entanglement-assisted quantum
$[[n,k,d;n-k]]_2$ codes 
(see e.g.~\cite{CZKarxiv}, \cite{LA}, 
\cite{LLG}, \cite{LLG17} and~\cite{LLGF}
for background material on entanglement-assisted quantum codes).
More precisely, 
if there is a Hermitian LCD $[n,k,d]_4$ code, 
then there is an
entanglement-assisted quantum 
$[[n,k,d;n-k]]_2$ code
(see e.g.~\cite{LLG}, \cite{LLG17} and~\cite{LLGF}).
From this point of view,
quaternary Hermitian LCD codes
play an important role in the study of
entanglement-assisted quantum $[[n,k,d;n-k]]_2$ codes.
Note that quaternary Hermitian LCD
codes are also called {\em zero radical codes} 
(see e.g.~\cite{LLG}, \cite{LLG17} and~\cite{LLGF}).

A Hermitian LCD $[n,k,d]_4$ code is called {\em optimal}
if there is no Hermitian LCD $[n,k,d']_4$ code 
for $d' > d$.
An entanglement-assisted quantum 
$[[n,k,d;c]]_2$ code is called {\em optimal}
if there is no entanglement-assisted quantum 
$[[n,k,d';c]]_2$ code for $d' > d$.
We denote the largest minimum weight $d$ by $d_Q(n,k)$. 
For $k \le n \le 20$,
the current state of knowledge about $d_Q(n,k)$ are listed
in~\cite[Table~II]{LA}  and~\cite[Table~6]{LLGF}.
Many optimal
entanglement-assisted quantum $[[n,k,d;n-k]]_2$ codes codes are constructed
from optimal quaternary Hermitian LCD codes.
As a contribution in this direction, in this note, 
we establish the existence of
optimal
entanglement-assisted quantum 
$[[n,k,d;n-k]]_2$ codes 
for 
\begin{equation*}\label{eq:p}
(n,k,d)=(14,6,7),
(15,7,7),
(17,6,9),  
(17,7,8), 
(19,7,9),
(20,7,10).
\end{equation*}
From~\cite[Table~II]{LA}  and~\cite[Table~6]{LLGF},
we determine 
the largest minimum weight as follows:
\begin{align*}
&
d_Q(14,6)= d_Q(15,7)=7, d_Q(17,6)= d_Q(19,7)=9, 
\\&
d_Q(17,7)= 8 \text{ and } d_Q(20,7)= 10.
\end{align*}
In addition, we establish the existence of 
an
entanglement-assisted quantum 
$[[20,8,9;12]]_2$ code.
We also give some observation on the largest minimum weights
for Hermitian LCD $[n,k]_4$ codes 
for $k=n-1,n-2$ and $n-3$.

All computer calculations in this note
were done by {\sc Magma}~\cite{Magma}.

\section{New optimal codes}\label{sec:2}

\subsection{Optimal quaternary Hermitian LCD codes}

We denote the finite field of order $4$
by $\FF_4=\{ 0,1,\ww , \vv  \}$, where $\omega^2 = \omega +1$.
A linear $[n,k]_4$ {\em code} $C$ 
is a $k$-dimensional vector subspace of $\FF_4^n$.
All codes  over $\FF_4$
in this note are linear.   
A code over $\FF_4$ is called {\em quaternary}. 
The {\em weight} $\wt(x)$ of a vector $x \in \FF_4^n$ is
the number of non-zero components of $x$.
A vector of $C$ is called a {\em codeword} of $C$.
The minimum non-zero weight of all codewords in $C$ is called
the {\em minimum weight} $d(C)$ of $C$. An $[n,k,d]_4$ code
is an $[n,k]_4$ code with minimum weight $d$.
Two $[n,k]_4$ codes $C$ and $C'$ are 
{\em equivalent} 
if there is an $n \times n$ monomial matrix $P$ over $\FF_4$ with 
$C' = \{ x P \mid x \in C\}$.  

Every $[n,k,d]_4$ code is equivalent to a code with generator
matrix of the form
$\left(
\begin{array}{cc}
I_k & A
\end{array}
\right)
$,
where $A$ is a $k \times (n-k)$ matrix
and $I_k$ denotes the identity matrix of order $k$.
Let $r_i$ be the $i$-th row of $A$.
Here, we may assume that $A$ satisfies the following 
conditions:
\begin{itemize}
\item[\rm (i)]
$r_1=(\0_{n-k-d+1}, \1_{d-1})$, where $\1_s$ denotes the
all-one vector of length $s$,
\item[\rm (ii)]
$\wt(r_i) \ge d-1$,
\item[\rm (iii)]
the first nonzero element of $r_i$ is $1$,
\item[\rm (iv)]
$r_1 < r_2 < \cdots < r_k$ if $d \ge 3$ and
$r_1 \le r_2 \le \cdots \le r_k$ if $d \le 2$,
\end{itemize}
where we consider some order $<$ on
the set of vectors of length $n-k$.
The set of matrices $A$ is constructed, row by row, under
the condition that the minimum weight of the 
$[n+m-k,m]_4$ code with generator matrix
\[
\left(
\begin{array}{ccccc}
      & r_1 \\
I_{m} & \vdots   \\
      & r_m \\
\end{array}
\right)
\]
is at least $d$ for each $m=2,3,\ldots,k-1$.
It is obvious that the set of all $[n,k,d]_4$
codes obtained in this approach
contains a set of all inequivalent $[n,k,d]_4$ codes.
It is known that
a quaternary code $C$ is Hermitian LCD if and only if
$G \overline{G}^T$ is nonsingular
for a generator matrix $G$ of $C$, where
$A^T$ and $\overline{A}$ denote the transposed
matrix and the conjugate matrix for a matrix $A$, respectively.
In addition, it is known that
a quaternary code $C$ is Hermitian LCD if and only if
$C^{\perp_H}$ is Hermitian LCD 
(see e.g.~\cite{CMTQ2} and~\cite{LLG}).

By the above approach, 
our exhaustive computer search found 
a Hermitian LCD code with
parameters 
$[15,7,7]_4$,
$[17,6,9]_4$,  
$[17,7,8]_4$ and 
$[20,7,10]_4$.
We denote these codes by
$C_{15}$,
$C_{17,1}$,
$C_{17,2}$  and
$C_{20}$, respectively.
These codes have generator matrices
$
\left(
\begin{array}{ccccc}
I_{7} &  M_{15}  \\
\end{array}
\right),
$
$
\left(
\begin{array}{ccccc}
I_{6} &  M_{17,1}  \\
\end{array}
\right)
$,
$
\left(
\begin{array}{ccccc}
I_{7} &  M_{17,2}  \\
\end{array}
\right)
$ and
$
\left(
\begin{array}{ccccc}
I_{7} &  M_{20}  \\
\end{array}
\right),
$
respectively, where
$M_{15}$,
$M_{17,1}$,
$M_{17,2}$  and
$M_{20}$ are listed in Figure~\ref{Fig}.

\begin{figure}[thbp]
\begin{center}
\begin{align*}
M_{15}=&
\left(
\begin{array}{ccccccccccccccc}
  0&  0&  1&  1&  1&  1&  1&  1\\
  1&  0&  1&\vv&\vv&  1&  0&\ww\\
  1&  1&  0&  1&\ww&  1&  1&  0\\
  1&  1&\ww&  1&\vv&  0&\ww&\vv\\
  1&\ww&  0&  1&  0&\ww&\vv&\vv\\
  1&\ww&\vv&\vv&  1&\ww&  1&  0\\
  1&\vv&\vv&  0&  0&\ww&\ww&\ww
\end{array}
\right)
\\
M_{17,1}=&
\left(
\begin{array}{ccccccccccccccc}
 0&  0&  0&  1&  1&  1&  1&  1&  1&  1&  1\\
 1&  1&  1&\ww&  1&  1&  1&  1&  0&  0&  0\\
 1&\ww&\ww&  1&\ww&  1&  0&  0&  1&  1&  0\\
 1&\ww&\vv&\ww&  1&  0&\ww&  0&  1&  0&  1\\
 1&\vv&  0&  1&  0&\ww&\vv&  1&  1&\ww&\ww\\
 1&\vv&\ww&\ww&  0&  0&\ww&  1&\ww&  1&  0
\end{array}
\right)
\\
M_{17,2}=&
\left(
\begin{array}{ccccccccccccccc}
  0&  0&  0&  1&  1&  1&  1&  1&  1&  1\\
  0&  1&\vv&\ww&  1&  0&\vv&\vv&\ww&  1\\
  0&  1&\ww&  0&  1&  1&  0&  1&\vv&\vv\\
  1&  0&\ww&\ww&  0&\ww&  1&\ww&  1&  0\\
  1&  0&\vv&  0&  1&  1&  0&\ww&\ww&\ww\\
  1&  1&  1&  0&\ww&\vv&\ww&  0&  1&\vv\\
  1&  1&\ww&\vv&\ww&\ww&  1&  0&\ww&  1
\end{array}
\right)
\\
M_{20}=&
\left(
\begin{array}{ccccccccccccccc}
 0&   0&   0&   0& 1& 1& 1&   1&  1&  1&  1&  1& 1\\
 1&   0& \vv& \ww& 1&\ww& 0& \ww&  1&\ww&  1&  0& 0\\
 1&   1&   0&   1& 0& 0&\ww& \vv&\ww&\ww&  1&\ww& 1\\
 1&   1& \ww& \ww&\ww& 1& 1&   0&  0&  1&  1&  0& 0\\
 1&   1& \vv&   1&\ww&\ww&\ww& \vv&  1&  0&\ww&  1& 0\\
 1& \ww&   1&   1& 1& 1& 1&   1&  1&  0&  0&  0& 0\\
 1& \vv& \ww& \vv& 1&\ww& 0&   1&  0&  1&  0&  1& 0
\end{array}
\right)
\end{align*}
\end{center}
\caption{Matrices $M_{15}$, $M_{17,1}$, $M_{17,2}$  and $M_{20}$}
\label{Fig}
\end{figure}

Let $C$ be an $[n,k]_4$ code.
A {\em shortened code} of $C$ 
on the coordinate $i$ is the set of all codewords 
in $C$ which are $0$ in the $i$-th coordinate with that 
coordinate deleted.
We denote the code by $S(C,i)$.
A {\em punctured code} of $C$ 
on the coordinate $i$ is the code obtained from $C$
by deleting the $i$-th coordinate.
Let $C_{14}$ be the code $S(C_{15},4)$.
We verified that
$C_{14}$ is a Hermitian LCD $[14,6,7]_4$ code.
We denote by $C_{19}$ the punctured code of $C_{20}$
on the first coordinate.
We verified  that
$C_{19}$ is a Hermitian LCD $[19,7,9]_4$ code.

Therefore, we have the following result.

\begin{prop}\label{prop}
There is a Hermitian LCD $[n,k,d]_4$ code for
\[
(n,k,d)=
(14,6,7),
(15,7,7),
(17,6,9), 
(17,7,8), 
(19,7,9),
(20,7,10).
\]
\end{prop}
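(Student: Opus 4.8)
The plan is to prove the existence statement constructively: exhibit one code for each of the six triples $(n,k,d)$ and verify directly the two properties at issue, namely that the minimum weight equals $d$ and that the code is Hermitian LCD. By the criterion recalled just before Proposition~\ref{prop}, the latter amounts to checking that $G\overline{G}^T$ is nonsingular for a generator matrix $G$.

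For the four ``primary'' triples $(15,7,7)$, $(17,6,9)$, $(17,7,8)$ and $(20,7,10)$, I would run the exhaustive search described in the excerpt. That is: restrict to generator matrices $\left(I_k \mid A\right)$ with $A$ normalized by (i)--(iv) — so $r_1=(\0_{n-k-d+1},\1_{d-1})$, each $\wt(r_i)\ge d-1$, each $r_i$ has leading entry $1$, and the rows are strictly increasing in a fixed order — and build $A$ row by row, discarding any partial choice $r_1,\dots,r_m$ for which the auxiliary $[n+m-k,m]_4$ code displayed in the excerpt already has minimum weight $<d$. This pruning keeps the branching finite while guaranteeing that every $[n,k,d]_4$ code appears, up to equivalence, among the completed matrices, so in particular every Hermitian LCD such code appears. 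For each completed $A$ one then tests nonsingularity of $G\overline{G}^T$ with $G=\left(I_k\mid A\right)$. Any matrix surviving both filters can be taken to be $M_{15}$, $M_{17,1}$, $M_{17,2}$, $M_{20}$ as recorded in Figure~\ref{Fig}; it then remains only to recompute, for these specific matrices, the minimum weight (by enumerating the $4^k$ codewords, or via {\sc Magma}'s weight-distribution routine) and the rank of $G\overline{G}^T$, which confirms the four assertions.

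For the remaining two triples I would pass to derived codes. For $(14,6,7)$: form $C_{14}=S(C_{15},4)$, obtain a generator matrix by taking the codewords of $C_{15}$ vanishing in coordinate $4$, deleting that coordinate and row-reducing, and check that the result has dimension $6$, minimum weight $7$ (note $d(S(C_{15},4))\ge d(C_{15})=7$ automatically, so only the reverse inequality is at issue), and $G\overline{G}^T$ nonsingular. For $(19,7,9)$: form $C_{19}$ by puncturing $C_{20}$ on the first coordinate, write down a generator matrix, and check dimension $7$, minimum weight $9$ (here $d(C_{19})\ge d(C_{20})-1=9$), and $G\overline{G}^T$ nonsingular. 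The point that genuinely needs care is that neither shortening nor puncturing preserves the Hermitian LCD property in general, and puncturing can in principle drop the dimension; so these verifications are not automatic and must be performed.

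The only real obstacle is the exhaustive search that produces the four primary matrices: the number of admissible rows grows rapidly with $n-k$, and the intermediate minimum-weight pruning is essential to make the enumeration feasible. Once the matrices of Figure~\ref{Fig} are in hand, everything else is a short finite computation — minimum weights and ranks of a handful of small matrices over $\FF_4$ — all carried out in {\sc Magma}.
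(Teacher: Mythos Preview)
Your proposal is correct and follows essentially the same route as the paper: the four primary codes $C_{15},C_{17,1},C_{17,2},C_{20}$ are produced by exactly the row-by-row search you describe and then certified via $G\overline{G}^T$, while $C_{14}=S(C_{15},4)$ and $C_{19}$ (the puncture of $C_{20}$ on the first coordinate) are checked directly to be Hermitian LCD with the stated parameters. Your caveat that shortening and puncturing need not preserve the LCD property is precisely the point the paper addresses by explicit verification.
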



The weight enumerator of an $[n,k]_4$ code
$C$ is defined as $\sum_{i=0}^n A_i y^i$,
where $A_i$ denotes the number of codewords of weight $i$ in $C$.
The weight enumerators of 
the codes $C_{14}$, $C_{15}$, 
$C_{17,1}$, 
$C_{17,2}$, 
$C_{19}$ and $C_{20}$ are listed in Table~\ref{Tab:WE}.

\begin{table}[thb]
\caption{Weight enumerators}
\label{Tab:WE}
\begin{center}
{\small
\begin{tabular}{c|l}
\noalign{\hrule height0.8pt}
Code & \multicolumn{1}{c}{Weight enumerator} \\
\hline
$C_{14}$ &
$1
+ 210y^{7}
+ 252y^{8}
+ 588y^{9}
+ 945y^{10}
+ 882y^{11}
+ 819y^{12}$\\&
$+ 336y^{13}
+  63y^{14}$
\\
$C_{15}$&
$
1
+  336y^{ 7}
+  756y^{ 8}
+ 1323y^{ 9}
+ 2415y^{10}
+ 4095y^{11}
+ 3759y^{12}
$\\
&$
+ 2289y^{13}
+ 1197y^{14}
+  213y^{15}$
\\
$C_{17,1}$&
$1
+ 201y^{ 9}
+ 279y^{10}
+ 492y^{11}
+ 777y^{12}
+ 840y^{13}
+ 849y^{14}
$\\
&$
+ 456y^{15}
+ 174y^{16}
+  27y^{17}$
\\
$C_{17,2}$&
$1
+  204y^{ 8}
+  549y^{ 9}
+ 1053y^{10}
+ 1977y^{11}
+ 3117y^{12}
+ 3711y^{13}
$\\
&$
+ 3111y^{14}
+ 1875y^{15}
+  642y^{16}
+  144y^{17}$
\\
$C_{19}$&
$1
+  111y^{ 9}
+  423y^{10}
+  801y^{11}
+ 1509y^{12}
+ 2595y^{13}
+ 3291y^{14}
$\\
&$
+ 3315y^{15}
+ 2502y^{16}
+ 1362y^{17}
+  402y^{18}
+   72y^{19}
$
\\
$C_{20}$&
$1
+  297y^{10}
+  441y^{11}
+  978y^{12}
+ 1767y^{13}
+ 2685y^{14}
+ 3381y^{15}
$\\
&$
+ 3078y^{16}
+ 2349y^{17}
+ 1038y^{18}
+  318y^{19}
+   51y^{20}$\\
\noalign{\hrule height0.8pt}
\end{tabular}
}
\end{center}
\end{table}


\subsection{Optimal
entanglement-assisted quantum codes}
An entanglement-assisted quantum
$[[n,k,d;c]]_2$ code $\cC$
encodes $k$ information qubits into $n$ channel qubits
with the help of $c$ pairs of maximally entangled Bell states.
The parameter $d$ is called the minimum weight of $\cC$.
The entanglement-assisted quantum code $\cC$
can correct up to $\lfloor \frac{d-1}{2} \rfloor$ 
errors acting on the $n$ channel qubits (see e.g.~\cite{LLG}
and~\cite{LLGF}).
An entanglement-assisted quantum 
$[[n,k,d;0]]_2$ code is a standard quantum code.
If there is a Hermitian LCD $[n,k,d]_4$ code, 
then there is an
entanglement-assisted quantum 
$[[n,k,d;n-k]]_2$ code 
(see e.g.~\cite{LLG} and~\cite{LLGF}).

An entanglement-assisted quantum 
$[[n,k,d;c]]_2$ code is called {\em optimal}
if there is no entanglement-assisted quantum 
$[[n,k,d';c]]_2$ code for $d' > d$.
We denote the largest minimum weight $d$ by $d_Q(n,k)$. 
The largest minimum weights $d_Q(n,k)$ have been widely studied
in~\cite{LA} for $n \le 20$.
The current state of knowledge about $d_Q(n,k)$ can be found 
in~\cite[Table~II]{LA}  and~\cite[Table~6]{LLGF} for $n \le 20$.
From~\cite[Table~II]{LA}  and~\cite[Table~6]{LLGF}, we have the following:
\[
\begin{array}{ll}
d_Q(14,6)=6\text{ or } 7, &d_Q(15,7)=6\text{ or } 7,\\
d_Q(17,6)=8\text{ or } 9, &d_Q(17,7)=7\text{ or } 8, \\
d_Q(19,7)=8\text{ or } 9, &d_Q(20,7)=9\text{ or }10.
\end{array}
\]
Therefore, 
from quaternary Hermitian LCD codes given in Proposition~\ref{prop},
we have the following: 

\begin{prop}
\begin{itemize}
\item[\rm (i)] 
	     There is an optimal
entanglement-assisted quantum 
$[[n,k,d;n-k]]_2$ code from a
Hermitian LCD $[n,k,d]_4$ code
for 
\[
(n,k,d)=(14,6,7),
(15,7,7),
(17,6,9),  
(17,7,8), 
(19,7,9),
(20,7,10).
\]
\item[\rm (ii)] 
${\displaystyle
\begin{array}{ll}
d_Q(14,6)= d_Q(15,7)=7, & d_Q(17,6)= d_Q(19,7)=9,\\
d_Q(17,7)= 8,           & d_Q(20,7)= 10.
\end{array}
}$
\end{itemize}
\end{prop}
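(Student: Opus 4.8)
The plan is to assemble three ingredients that are already in place: the existence statement of Proposition~\ref{prop}, the passage from quaternary Hermitian LCD codes to entanglement-assisted quantum codes recalled in the introduction, and the upper bounds on $d_Q(n,k)$ tabulated in~\cite[Table~II]{LA} and~\cite[Table~6]{LLGF}. No new code construction is needed; the proposition is a bookkeeping consequence of these facts.

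For part~(i), I would fix one of the six triples $(n,k,d)$. By Proposition~\ref{prop} there is a Hermitian LCD $[n,k,d]_4$ code, and hence, by the construction noted above (see~\cite{LLG} and~\cite{LLGF}), an entanglement-assisted quantum $[[n,k,d;n-k]]_2$ code. To see that this code is optimal it suffices to show $d_Q(n,k)=d$; since the code just produced already gives $d_Q(n,k)\ge d$, only the matching upper bound is needed. The tables in~\cite[Table~II]{LA} and~\cite[Table~6]{LLGF} supply $d_Q(14,6)\le 7$, $d_Q(15,7)\le 7$, $d_Q(17,6)\le 9$, $d_Q(17,7)\le 8$, $d_Q(19,7)\le 9$ and $d_Q(20,7)\le 10$, i.e.\ exactly the value $d$ attained in each case, so each code is optimal. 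Part~(ii) then follows at once: combining these upper bounds with the lower bounds from the codes of part~(i) forces $d_Q(14,6)=d_Q(15,7)=7$, $d_Q(17,6)=d_Q(19,7)=9$, $d_Q(17,7)=8$ and $d_Q(20,7)=10$, resolving in every instance the two-value ambiguity recorded in the cited tables.

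The substantive content is entirely contained in Proposition~\ref{prop}, which I am allowed to assume; granted that, nothing else is hard. The one point that deserves care is that the codes $C_{14}$ and $C_{19}$ are obtained from $C_{15}$ and $C_{20}$ by shortening and by puncturing, respectively, and neither operation preserves the Hermitian LCD property in general. Hence one must (and the paper does) verify directly, via the criterion that $G\overline{G}^T$ be nonsingular for a generator matrix $G$, that $C_{14}$ and $C_{19}$ are Hermitian LCD, and confirm by computation that their minimum weights are indeed $7$ and $9$; this is the only step where something could go wrong, and in these specific instances it checks out.
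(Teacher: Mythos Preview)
Your argument is correct and matches the paper's own reasoning exactly: the paper derives this proposition immediately from Proposition~\ref{prop}, the Hermitian LCD $\to$ entanglement-assisted quantum code construction, and the ranges for $d_Q(n,k)$ quoted from~\cite[Table~II]{LA} and~\cite[Table~6]{LLGF}. Your closing remark about verifying the Hermitian LCD property of $C_{14}$ and $C_{19}$ is really part of the justification of Proposition~\ref{prop} rather than of this one, but it is accurate and does no harm.
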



Let $d_4(n,k)$ denote the largest minimum weight among all 
Hermitian LCD $[n,k]_4$ codes.
From~\cite[Table~II]{LA} and~\cite[Table~6]{LLGF}, it is known that
$d_4(14,6)\le 7$,
$d_4(15,7)\le 7$,
$d_4(17,6)\le 9$,
$d_4(17,7)\le 8$,
$d_4(19,7)\le 9$ and 
$d_4(20,7)\le 10$.
Hence,
quaternary Hermitian LCD codes listed in Proposition~\ref{prop}
are optimal.

\begin{prop}
${\displaystyle
\begin{array}{ll}
d_4(14,6)= d_4(15,7)= 7, &d_4(17,6)= d_4(19,7)= 9,\\
d_4(17,7)= 8,             &d_4(20,7)= 10.
\end{array}
}$
\end{prop}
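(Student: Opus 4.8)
The proof of the final proposition combines two ingredients, both already available in the excerpt. On the one hand, Proposition~\ref{prop} exhibits explicit quaternary Hermitian LCD codes with parameters $[14,6,7]_4$, $[15,7,7]_4$, $[17,6,9]_4$, $[17,7,8]_4$, $[19,7,9]_4$ and $[20,7,10]_4$, so for each of the six pairs $(n,k)$ we immediately get the lower bound $d_4(n,k) \ge d$ with $d$ as listed. On the other hand, the upper bounds $d_4(14,6)\le 7$, $d_4(15,7)\le 7$, $d_4(17,6)\le 9$, $d_4(17,7)\le 8$, $d_4(19,7)\le 9$, $d_4(20,7)\le 10$ are recorded in the tables of~\cite{LA} and~\cite{LLGF}. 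The plan is simply to match the lower bound from the constructed codes against the upper bound from the literature, observe that they coincide in every case, and conclude equality.

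So, concretely, I would first state that the codes $C_{14}$, $C_{15}$, $C_{17,1}$, $C_{17,2}$, $C_{19}$, $C_{20}$ from Proposition~\ref{prop} are Hermitian LCD with minimum weights $7,7,9,8,9,10$ respectively; this is exactly the content of that proposition (and the weight enumerators in Table~\ref{Tab:WE} confirm the minimum weights independently, since in each case the smallest exponent appearing with nonzero coefficient is the claimed $d$). Hence $d_4(14,6)\ge 7$, $d_4(15,7)\ge 7$, $d_4(17,6)\ge 9$, $d_4(17,7)\ge 8$, $d_4(19,7)\ge 9$, $d_4(20,7)\ge 10$. Second, I would invoke the tabulated upper bounds $d_4(14,6)\le 7$, $d_4(15,7)\le 7$, $d_4(17,6)\le 9$, $d_4(17,7)\le 8$, $d_4(19,7)\le 9$, $d_4(20,7)\le 10$ from~\cite[Table~II]{LA} and~\cite[Table~6]{LLGF}. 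Combining the two directions gives the six equalities, completing the proof.

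There is essentially no obstacle here: the real work has already been done, either by the exhaustive computer search behind Proposition~\ref{prop} (which produced the explicit generator matrices $M_{15}$, $M_{17,1}$, $M_{17,2}$, $M_{20}$ in Figure~\ref{Fig}, with $C_{14}$ obtained by shortening $C_{15}$ and $C_{19}$ by puncturing $C_{20}$) or by the earlier literature that established the upper bounds. The only thing one must be slightly careful about is bookkeeping: checking that the six parameter triples line up correctly between Proposition~\ref{prop} and the list of upper bounds, and that each constructed code really is Hermitian LCD — the latter being verifiable via the criterion stated in the excerpt, namely that $G\overline{G}^T$ is nonsingular for a generator matrix $G$, a finite check carried out in {\sc Magma}. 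Once that matching is in place, the proposition follows at once.

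\begin{proof}
By Proposition~\ref{prop} there exist Hermitian LCD $[n,k,d]_4$ codes for
$(n,k,d)=(14,6,7)$, $(15,7,7)$, $(17,6,9)$, $(17,7,8)$, $(19,7,9)$ and $(20,7,10)$;
indeed the codes $C_{14}$, $C_{15}$, $C_{17,1}$, $C_{17,2}$, $C_{19}$, $C_{20}$ have these parameters, and their minimum weights are also confirmed by the weight enumerators in Table~\ref{Tab:WE}. Hence
\[
d_4(14,6)\ge 7,\quad d_4(15,7)\ge 7,\quad d_4(17,6)\ge 9,\quad d_4(17,7)\ge 8,\quad d_4(19,7)\ge 9,\quad d_4(20,7)\ge 10.
\]
On the other hand, from~\cite[Table~II]{LA} and~\cite[Table~6]{LLGF} it is known that
\[
d_4(14,6)\le 7,\quad d_4(15,7)\le 7,\quad d_4(17,6)\le 9,\quad d_4(17,7)\le 8,\quad d_4(19,7)\le 9,\quad d_4(20,7)\le 10.
\]
Combining these two chains of inequalities yields
$d_4(14,6)= d_4(15,7)= 7$, $d_4(17,6)= d_4(19,7)= 9$, $d_4(17,7)= 8$ and $d_4(20,7)= 10$.
\end{proof}
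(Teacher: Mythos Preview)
Your proof is correct and follows exactly the paper's own argument: the paper also obtains the lower bounds from the explicit Hermitian LCD codes of Proposition~\ref{prop} and the matching upper bounds from \cite[Table~II]{LA} and \cite[Table~6]{LLGF}, concluding that the codes are optimal. Nothing further is needed.
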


\subsection{Largest minimum weights}

From~\cite[Table~II]{LA} and~\cite[Table~6]{LLGF}, 
it is known that $d_Q(12,6)=5$ or $6$.
By the approach given in the beginning of this section,
our exhaustive search shows that there is no Hermitian LCD 
$[12,6,6]_4$ code.
In addition, 
our exhaustive computer search found 
a Hermitian LCD $[12,6,5]_4$ code $D_{12}$.
The code $D_{12}$ has generator matrix
$
\left(
\begin{array}{ccccc}
I_{6} &  N_{12}  \\
\end{array}
\right),
$
where
\[
N_{12}=
\left(
\begin{array}{ccccccccccccccc}
  0&  0&  1&  1&  1&  1\\
  0&  1&  0&\vv&\ww&\vv\\
  1&  1&  0&\ww&  0&  1\\
  1&  1&\vv&\ww&\vv&\vv\\
  1&\ww&  0&\ww&\ww&\ww\\
  1&\vv&  0&  1&  1&  0
\end{array}
\right).
\]
The weight enumerator of $D_{12}$ is given by:
\[
1
+   72y^{5}
+  177y^{6}
+  378y^{7}
+  792y^{8}
+ 1044y^{9}
+  999y^{10}
+  522y^{11}
+  111y^{12}.
\]
\begin{prop}
$d_4(12,6)=5$.
\end{prop}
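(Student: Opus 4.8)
The plan is to establish the equality $d_4(12,6) = 5$ by combining an explicit construction with an exhaustive non-existence argument, exactly mirroring the structure already used for the other parameters in this section. The upper bound $d_4(12,6) \le 6$ comes for free: a Hermitian LCD $[12,6]_4$ code is in particular a (not necessarily linear-complementary) $[12,6]_4$ code, and the best $[12,6]_4$ code has minimum weight $6$, so no Hermitian LCD $[12,6,d]_4$ code with $d \ge 7$ can exist. Hence it suffices to prove that (a) there is a Hermitian LCD $[12,6,5]_4$ code, and (b) there is no Hermitian LCD $[12,6,6]_4$ code.

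For part (a), I would simply exhibit the code $D_{12}$ with generator matrix $\left(\begin{array}{cc} I_6 & N_{12}\end{array}\right)$ as displayed above, and verify two things by direct computation: first, that $G\overline{G}^T$ is nonsingular over $\FF_4$ (which, by the criterion recalled earlier in the excerpt, is equivalent to $D_{12}$ being Hermitian LCD), and second, that the minimum weight of $D_{12}$ equals $5$ (the stated weight enumerator, with leading term $72y^5$, certifies this). Both are routine finite checks and are confirmed by {\sc Magma}.

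For part (b), the argument is the exhaustive search described at the beginning of Section~\ref{sec:2}: every $[12,6,6]_4$ code is equivalent to one with generator matrix $\left(\begin{array}{cc} I_6 & A\end{array}\right)$ where the $6 \times 6$ matrix $A$ satisfies normalization conditions (i)--(iv), and one builds the candidate matrices $A$ row by row, pruning branches as soon as the minimum weight of the intermediate code drops below $6$. This produces a finite list containing representatives of all $[12,6,6]_4$ codes up to equivalence; for each representative one checks whether $G\overline{G}^T$ is singular. Since being Hermitian LCD is invariant under the monomial equivalence used here (equivalently, one may take the closure of the list under the group action, but it is cleaner to note that LCD-ness is an equivalence invariant for quaternary codes under monomial transformations because a monomial matrix $P$ satisfies $P\overline{P}^T$ diagonal nonsingular), finding no LCD member in the list proves that no Hermitian LCD $[12,6,6]_4$ code exists.

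The main obstacle is purely computational rather than conceptual: one must be confident that the row-by-row enumeration really does exhaust all equivalence classes of $[12,6,6]_4$ codes, i.e.\ that the pruning based on conditions (i)--(iv) and the intermediate minimum-weight test is sound and complete. This is the same engine already invoked (and trusted) for the $(17,6,9)$, $(17,7,8)$, $(20,7,10)$ and $(12,6,6)$ searches elsewhere in this note, so I would lean on that; the feasibility rests on the search space for $k = 6$, $n = 12$ being small enough for {\sc Magma} to handle exhaustively, which it is. With the upper bound $d_4(12,6) \le 6$, the construction in (a), and the non-existence in (b), we conclude $d_4(12,6) = 5$.
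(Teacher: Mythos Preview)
Your proposal is correct and follows essentially the same route as the paper: an explicit Hermitian LCD $[12,6,5]_4$ code $D_{12}$ for the lower bound, and the row-by-row exhaustive search of Section~\ref{sec:2} to rule out a Hermitian LCD $[12,6,6]_4$ code for the upper bound. Your added remark that Hermitian LCD-ness is preserved under monomial equivalence over $\FF_4$ (because $P\overline{P}^T=I$ for any monomial $P$) is a welcome justification that the paper leaves implicit; otherwise the two arguments coincide.
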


It is worthwhile to determine whether there is a 
entanglement-assisted quantum $[[12,6,6;6]]_2$ code.

From~\cite[Table~II]{LA} and~\cite[Table~6]{LLGF}, it is known 
that $d_Q(20,8)=8,9$ or $10$.
By the approach given in the beginning of this section,
our computer search found a Hermitian LCD 
$[20,8,9]_4$ code $D_{20}$.
The code $D_{20}$ has generator matrix
$
\left(
\begin{array}{ccccc}
I_{8} &  N_{20}  \\
\end{array}
\right),
$
where
\[
N_{20}=
\left(
\begin{array}{ccccccccccccccc}
0&   0&   0&   0& 1&   1& 1& 1& 1& 1& 1& 1\\
1&   1&   1& \ww& 1&   1& 0& 0& 1& 1& 0& 0\\
1&   1& \ww&   1&\ww&   0& 1& 0& 1& 0& 1& 0\\
1& \ww& \vv&   1& 1&   1& 1& 1& 0& 0& 0& 0\\
1& \ww&   1&   0&\ww&   0&\ww& 1& \ww& 1& 0& 0\\
1& \vv& \vv& \ww& 0& \ww& 0& 1& \ww& 0& 1& 0\\
1& \vv& \ww&   0& 1& \vv& 0& 1& 1& 0& 0& 1\\
1& \vv&   1& \vv& 1&   0&\ww&\ww& 0& \ww& 1& 1
\end{array}
\right).
\]
The weight enumerator of $D_{20}$ is given by:
\begin{multline*}
1
+   288y^{ 9}
+   714y^{10}
+  1725y^{11}
+  3888y^{12}
+  7272y^{13}
+ 11208y^{14}
\\
+ 13338y^{15}
+ 12423y^{16}
+  8640y^{17}
+  4446y^{18}
+  1377y^{19}
+   216y^{20}.
\end{multline*}
\begin{prop}
\begin{itemize}
\item[\rm (i)]
There is a Hermitian LCD $[20,8,9]_4$ code
and 
	     there is an
	     entanglement-assisted quantum 
$[[20,8,9;12]]_2$ code.
\item[\rm (ii)]
$d_4(20,8)=9 \text{ or }10$ and $d_Q(20,8)=9 \text{ or }10$.
\end{itemize}
\end{prop}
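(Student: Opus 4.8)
The plan is to exhibit the explicit $[20,8,9]_4$ code $D_{20}$ given by the generator matrix $(I_8 \mid N_{20})$ and verify directly that it has the required properties. First I would compute $G\overline{G}^T$ for $G=(I_8 \mid N_{20})$ and check that this $8\times 8$ matrix over $\FF_4$ is nonsingular; by the criterion recalled in Section~\ref{sec:2}, this certifies that $D_{20}$ is Hermitian LCD. Next I would compute the weight enumerator of $D_{20}$ (listed above) and read off that the minimum weight is $9$, so $D_{20}$ is a Hermitian LCD $[20,8,9]_4$ code. Invoking the standard construction recalled in the excerpt (a Hermitian LCD $[n,k,d]_4$ code yields an entanglement-assisted quantum $[[n,k,d;n-k]]_2$ code), the existence of an entanglement-assisted quantum $[[20,8,9;12]]_2$ code follows immediately, giving part~(i).

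For part~(ii), the lower bounds $d_4(20,8)\ge 9$ and $d_Q(20,8)\ge 9$ are now immediate from part~(i). The upper bound $d_Q(20,8)\le 10$ is quoted from \cite[Table~II]{LA} and \cite[Table~6]{LLGF} (indeed the excerpt already states $d_Q(20,8)=8,9$ or $10$), and since any Hermitian LCD $[20,8,d]_4$ code gives an entanglement-assisted quantum $[[20,8,d;12]]_2$ code, we also get $d_4(20,8)\le d_Q(20,8)\le 10$. Combining the bounds yields $d_4(20,8)\in\{9,10\}$ and $d_Q(20,8)\in\{9,10\}$, which is exactly the claim.

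The only genuine computational content is the verification that $N_{20}$ produces a Hermitian LCD code of minimum weight exactly $9$; both checks — nonsingularity of $G\overline{G}^T$ and the weight enumerator — are routine finite-field linear-algebra computations carried out in {\sc Magma}~\cite{Magma}, as for the other codes in this note. The main (and essentially only) obstacle is upstream of this proof: finding the matrix $N_{20}$ in the first place, which was done by the row-by-row exhaustive search described at the beginning of Section~\ref{sec:2}; once $N_{20}$ is in hand, the proof is a short verification plus an appeal to the cited equivalence between Hermitian LCD quaternary codes and entanglement-assisted quantum codes. Note that this proposition, unlike the earlier ones, does not pin down $d_4(20,8)$ or $d_Q(20,8)$ exactly, since whether a Hermitian LCD $[20,8,10]_4$ code (equivalently an optimal entanglement-assisted quantum $[[20,8,10;12]]_2$ code) exists is left open.
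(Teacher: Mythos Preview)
Your proposal is correct and matches the paper's approach: exhibit $D_{20}$ via $(I_8\mid N_{20})$, verify it is Hermitian LCD with minimum weight $9$ (via $G\overline{G}^T$ and the weight enumerator), invoke the Hermitian LCD $\Rightarrow$ EAQECC construction for part~(i), and combine this lower bound with the upper bound $d_Q(20,8)\le 10$ from~\cite{LA,LLGF} (together with $d_4\le d_Q$) for part~(ii). The paper presents exactly this argument in the text preceding the proposition rather than in a separate proof environment.
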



\section{$d_4(n,k)$ for  $k=n-1,n-2,n-3$}

In this section, we study $d_4(n,k)$ for $k=n-1,n-2,n-3$.

Let $C$ be an $[n,n-1]_4$ code.
We may assume without loss of generality that 
$C$ has generator matrix of the following form:
\[
\left(
\begin{array}{cccc}
 &       && a_1 \\
 &I_{n-1}&& \vdots\\
 &       && a_{n-1}
\end{array}
\right),
\]
where $a_i \in \{0,1\}$ $(i=1,2,\ldots,n-1)$
and $a=(a_1,a_2,\ldots,a_{n-1})$.
The matrix 
$
\left(
\begin{array}{cccccc}
\overline{a_1} & \cdots & \overline{a_{n-1}} & 1
\end{array}
\right)
$ is a generator matrix of $C^{\perp_H}$.
It follows that $C^{\perp_H}$ is Hermitian LCD if and only if 
$\wt(a) \equiv 0 \pmod 2$.
In other words, 
$C$ is Hermitian LCD if and only if 
$\wt(a) \equiv 0 \pmod 2$.
Hence, we have the following:
\begin{prop}\label{prop:n-1}
Suppose that $n \ge 2$.  Then
\[
d_4(n,n-1)=
\begin{cases}
1 & \text{ if } n \text{ is even,}\\
2 & \text{ if } n \text{ is odd.}
\end{cases}
\]
\end{prop}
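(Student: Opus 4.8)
The plan is to combine the Hermitian LCD criterion already obtained above---that the code $C$ with generator matrix $\left(\begin{array}{cc} I_{n-1} & a^T \end{array}\right)$, where $a=(a_1,\dots,a_{n-1})\in\{0,1\}^{n-1}$, is Hermitian LCD if and only if $\wt(a)\equiv 0\pmod 2$---with an elementary weight bound, splitting into the two parity cases for $n$. First I would record the uniform upper bound: in the generator matrix $\left(\begin{array}{cc} I_{n-1} & a^T \end{array}\right)$ every row has weight $1$ or $2$, so $d_4(n,n-1)\le 2$ for all $n\ge 2$; moreover a Hermitian LCD $[n,n-1]_4$ code always exists (take $a=\0_{n-1}$, for which $\wt(a)=0$), so $d_4(n,n-1)\ge 1$.

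For $n$ even I would show that no Hermitian LCD $[n,n-1]_4$ code has minimum weight $2$, which with the above forces $d_4(n,n-1)=1$. Indeed, if such a code is Hermitian LCD then $\wt(a)$ is even; since $n-1$ is odd this excludes $\wt(a)=n-1$, so $a_j=0$ for some $j\in\{1,\dots,n-1\}$, and then the $j$-th row of the generator matrix, namely $(e_j,a_j)=(e_j,0)$, is a codeword of weight $1$. Hence $d(C)=1$.

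For $n$ odd, $n-1$ is even, so the code $C$ with $a=\1_{n-1}$ is Hermitian LCD, and I would verify $d(C)=2$ directly. A nonzero codeword of $C$ has the form $\bigl(x,\ \sum_{i=1}^{n-1}x_i\bigr)$ for some $x\in\FF_4^{n-1}\setminus\{\0_{n-1}\}$: if $\wt(x)\ge 2$ its weight is already at least $2$, while if $\wt(x)=1$, say $x=\lambda e_j$ with $\lambda\ne 0$, the last coordinate equals $\lambda\ne 0$ and the weight is exactly $2$. So $C$ has no codeword of weight $1$, whence $d(C)=2$, and the upper bound gives $d_4(n,n-1)=2$.

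I do not anticipate a real obstacle, as the computations are routine; the one point that needs care is the interplay between the parity constraint $\wt(a)\equiv 0\pmod 2$ imposed by the Hermitian LCD condition and the integer $n-1$. It is exactly the parity clash when $n$ is even---where $\wt(a)=n-1$ becomes impossible, forcing a zero coordinate and hence a weight-one codeword---that separates the behaviour of the even and odd cases.
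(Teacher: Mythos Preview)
Your proof is correct and follows the same approach as the paper: use the parity criterion $\wt(a)\equiv 0\pmod 2$ for the Hermitian LCD property of the code with generator matrix $\bigl(I_{n-1}\ a^T\bigr)$, $a\in\{0,1\}^{n-1}$, and then read off the minimum weight from whether $a$ can be taken to be $\1_{n-1}$. The paper simply writes ``Hence, we have the following'' after the criterion; you have supplied the routine details (the Singleton-type bound $d\le 2$, the weight-$1$ row when some $a_j=0$, and the verification $d(C)=2$ when $a=\1_{n-1}$), which are exactly what is implicit there.
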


%

The following lemma is a key idea for the determination of 
$d_4(n,n-2)$ and $d_4(n,n-3)$.

\begin{lem}\label{lem:n-i}
Let $i$ be an integer with $2 \le i < n$.
If $n > \frac{4^i-1}{3}$, then
$d_4(n,n-i) = 2$.
\end{lem}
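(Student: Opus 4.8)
### Proof proposal for Lemma \ref{lem:n-i}

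The plan is to produce, under the hypothesis $n > \frac{4^i-1}{3}$, a Hermitian LCD $[n,n-i,2]_4$ code, and then to note that $d_4(n,n-i)=2$ follows because a Hermitian LCD $[n,n-i]_4$ code of minimum weight $1$ cannot have minimum weight larger on all of $C$ in the relevant regime (more precisely, $d_4(n,n-i)\ge 2$ will be exhibited by the construction, and $d_4(n,n-i)\le 2$ will follow from a standard bound, since a $[n,n-i,d]_4$ code with $d\ge 3$ would need $n$ small relative to $4^i$ by the Singleton/Hamming-type constraint — in fact the relevant obstruction is that the redundancy $i$ is too small to support $d\ge 3$ once $n$ is large). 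First I would set up a generator matrix in standard form $G=(\,I_{n-i}\mid A\,)$ where $A$ is an $(n-i)\times i$ matrix over $\FF_4$, so that $G\overline G^T = I_{n-i}+A\overline A^T$, and recall from the excerpt that $C$ is Hermitian LCD iff $G\overline G^T$ is nonsingular.

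The key combinatorial observation is that there are exactly $\frac{4^i-1}{3}$ one-dimensional subspaces of $\FF_4^i$ (equivalently, $\frac{4^i-1}{3}$ nonzero columns of $\FF_4^i$ up to scalar multiples by $\FF_4^\ast$). So when $n-i > \frac{4^i-1}{3}$ — which is implied by $n > \frac{4^i-1}{3}$ together with $i<n$, and which is the substantive content of the hypothesis — any choice of $n-i$ columns for $A$ from $\FF_4^i\setminus\{\0\}$ must, by pigeonhole, repeat some projective point; that is, two rows... wait — more carefully, two \emph{columns} coincide up to scalar. This forces the columns of $G$ to contain a weight-$2$ dependency, hence $d(C)\le 2$. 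To get $d(C)\ge 2$ I only need every column of $A$ to be nonzero and every row to be handled, but weight $1$ in $C$ would come from a zero column of $A$; choosing all columns of $A$ nonzero gives $d(C)\ge 2$. So the minimum weight is exactly $2$ for such codes, and the whole task reduces to choosing the nonzero columns so that $I_{n-i}+A\overline A^T$ is nonsingular.

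The main work, and the step I expect to be the real obstacle, is guaranteeing the Hermitian LCD condition, i.e.\ that $\det(I_{n-i}+A\overline A^T)\neq 0$, while keeping all columns of $A$ nonzero. I would approach this by a greedy/inductive construction: build $A$ one row at a time. Writing $A_m$ for the first $m$ rows, one wants $I_m+A_m\overline{A_m}^T$ nonsingular; adding a new row $v\in\FF_4^i$ changes the matrix to $\begin{pmatrix} I_m+A_m\overline{A_m}^T & A_m\overline v^T\\ v\overline{A_m}^T & 1+v\overline v^T\end{pmatrix}$, whose determinant, by the Schur complement, is a single scalar condition on $v$ that excludes at most a proper affine subset of the $4^i$ choices of $v$; since $4^i > n$ by hypothesis, and we have spent at most $n-i-1$ rows so far, there is room to pick a valid $v$, and one can further insist $v$ lies in a previously-unused projective class only until those run out — after which repetition is \emph{forced}, which is exactly what we want for $d=2$. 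The delicate point is checking that the scalar condition $1+v\overline v^T \neq v\overline{A_m}^T(I_m+A_m\overline{A_m}^T)^{-1}A_m\overline v^T$ genuinely rules out only few $v$'s and does not conspire to eliminate every nonzero vector; here I would use that $v\mapsto v\overline v^T$ takes values in $\FF_2=\{0,1\}\subseteq\FF_4$ (since $\overline{v\overline v^T}=v\overline v^T$), so the condition splits into at most two linear-type constraints and the counting goes through. Alternatively, one may prefer a cleaner route: exhibit an explicit $A$ (e.g.\ take the $\frac{4^i-1}{3}$ projective representatives once and then pad with repeats of a fixed column $e_1$), and compute $A\overline A^T$ in closed form — the repeated-$e_1$ block contributes a rank-one correction, and a direct determinant evaluation should show nonsingularity; this trades the inductive bookkeeping for one concrete linear-algebra computation.
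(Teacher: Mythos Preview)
Your overall plan---an upper bound from a packing/Hamming-type argument and a lower bound from an explicit Hermitian LCD code of minimum weight $2$---is the right shape, and it is also the paper's shape. But your execution of the construction is far more complicated than needed and contains two false steps.

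First, the implication ``$n>\frac{4^i-1}{3}$ together with $i<n$ implies $n-i>\frac{4^i-1}{3}$'' is false (take $i=2$, $n=6$: then $\frac{4^i-1}{3}=5$ and $n-i=4$). Second, ``$4^i>n$ by hypothesis'' has the inequality the wrong way round: the hypothesis $n>\frac{4^i-1}{3}$ gives $4^i<3n+1$, not $4^i>n$. Both statements are invoked in your justification that the greedy row-by-row construction succeeds, so as written the construction is unjustified. There is also a persistent rows/columns confusion: $A$ is $(n-i)\times i$, so it has $i$ columns, and your pigeonhole should be applied to the $n$ columns of the parity-check matrix $H=(-A^T\mid I_i)$ in $\FF_4^i$, not to ``$n-i$ columns of $A$''. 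Done that way, the pigeonhole \emph{is} the sphere-packing bound and yields $d\le 2$ for \emph{every} $[n,n-i]_4$ code once $n>\frac{4^i-1}{3}$; you do not need to engineer $d\le 2$ into your particular construction at all.

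The paper sidesteps the entire greedy/Schur-complement machinery with a one-line choice: take every row of $A$ equal to $(1,1,0,\ldots,0)\in\FF_4^i$. Then each entry of $A\overline{A}^T$ equals $1\cdot\overline{1}+1\cdot\overline{1}=0$, so $G\overline{G}^T=I_{n-i}+A\overline{A}^T=I_{n-i}$ is trivially nonsingular and $C$ is Hermitian LCD. Since every row of $A$ is nonzero there is no weight-$1$ codeword, and the sum of any two rows of $G$ has weight $2$, so $d(C)=2$. The upper bound is exactly the sphere-packing bound: $d\ge 3$ forces $1+3n\le 4^i$, i.e.\ $n\le\frac{4^i-1}{3}$. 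The idea you are missing is that choosing all rows of $A$ equal to a single vector of even weight makes $A\overline{A}^T$ vanish identically, which kills the nonsingularity question in one stroke.
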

\begin{proof}
Let $C$ be an $[n,n-i,d]_4$ code with generator matrix
of the form:
\[
G=
\left(
\begin{array}{ccccccccc}
 &           && 1&1&0& \cdots & 0 \\
 &I_{n-i}&& \vdots& \vdots & \vdots & & \vdots\\
 &          && 1&1&0& \cdots & 0 \\
\end{array}
\right).
\]
Since $G \overline{G}^T=I_{n-i}$,
$C$ is Hermitian LCD.
By the construction, it is trivial that $C$ has minimum weight $2$.
By the sphere-packing bound, if $d \ge 3$, then
$n \le \frac{4^i-1}{3}$.
The result follows.
\end{proof}


\begin{prop}\label{prop:n-2}
\[
d_4(n,n-2)=
\begin{cases}
3& \text{ if } n =3, \\
2& \text{ if } n \ge 4.\\
\end{cases}
\]
\end{prop}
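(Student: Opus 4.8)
The plan is to prove the two cases separately, the upper bound coming from Lemma~\ref{lem:n-i} (or a direct sphere-packing argument) and the matching lower bound by exhibiting explicit Hermitian LCD codes.

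For $n = 3$: here $n - 2 = 1$, so we are looking at $[3,1]_4$ codes. The largest minimum weight of \emph{any} $[3,1]_4$ code is $3$, attained by the repetition code $C = \langle (1,1,1) \rangle$. Its generator matrix $G = (1\ 1\ 1)$ satisfies $G\overline{G}^T = 1 + 1 + 1 = 1 \ne 0$ in $\FF_4$, so $C$ is Hermitian LCD. Hence $d_4(3,1) = 3$.

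For $n \ge 4$: First I would dispose of the upper bound $d_4(n,n-2) \le 2$. When $n > \frac{4^2-1}{3} = 5$, i.e. $n \ge 6$, Lemma~\ref{lem:n-i} with $i=2$ gives $d_4(n,n-2) = 2$ directly. The remaining cases $n = 4$ and $n = 5$ require a separate check: an $[n,n-2,d]_4$ code with $d \ge 3$ would, by the sphere-packing (Hamming) bound, satisfy $4^{n-2}(1 + 3n) \le 4^n$, i.e. $1 + 3n \le 16$, which fails for $n = 5$ but is satisfied for $n = 4$. So the genuinely delicate point is $n = 4$: one must rule out a Hermitian LCD $[4,2,3]_4$ code. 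The $[4,2,3]_4$ codes are precisely the (equivalents of the) hexacode-type MDS codes; up to equivalence there is essentially the code with generator matrix $\left(\begin{smallmatrix} 1 & 0 & 1 & 1 \\ 0 & 1 & 1 & \ww \end{smallmatrix}\right)$ (and its monomial images). One checks $G\overline{G}^T$ for a set of representatives and finds it singular in every case, so no Hermitian LCD $[4,2,3]_4$ code exists; thus $d_4(4,2) \le 2$. (This is the step I expect to be the main obstacle, since it needs either a short classification argument for $[4,2,3]_4$ codes or an appeal to the exhaustive-search methodology described at the start of Section~\ref{sec:2}, rather than a clean one-line bound.)

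For the lower bound $d_4(n,n-2) \ge 2$ when $n \ge 4$, I would simply invoke the construction in the proof of Lemma~\ref{lem:n-i}: the $[n,n-2]_4$ code with generator matrix $\left(\begin{smallmatrix} I_{n-2} & \mathbf{b} & \mathbf{b} \end{smallmatrix}\right)$ where $\mathbf{b} = (1,1,\dots,1)^T$ — wait, more precisely $G = (\,I_{n-2}\mid B\,)$ with $B$ the $(n-2)\times 2$ all-ones matrix — satisfies $G\overline{G}^T = I_{n-2} + B\overline{B}^T = I_{n-2} + 2J$; over $\FF_4$ this is $I_{n-2}$, which is nonsingular, so the code is Hermitian LCD, and it plainly has a weight-$2$ codeword (each row) while all rows have weight $3 \ge 2$, giving minimum weight exactly $2$. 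Combining the bounds yields $d_4(n,n-2) = 2$ for $n \ge 4$, completing the proof.
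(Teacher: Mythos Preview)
Your argument has a genuine gap at $n=5$. You claim the sphere-packing inequality $1+3n\le 16$ ``fails for $n=5$'', but in fact $1+3\cdot 5=16\le 16$ holds with equality: the quaternary Hamming code is a perfect $[5,3,3]_4$ code, so sphere-packing cannot rule out minimum weight $3$ here. Consequently $n=5$ is in exactly the same position as $n=4$: you must argue that no \emph{Hermitian LCD} $[5,3,3]_4$ code exists, not merely that no $[5,3,3]_4$ code exists. (One way is to note that monomial equivalence over $\FF_4$ preserves $G\overline{G}^T$ --- since $P\overline{P}^T=I_n$ for any monomial $P$ --- so it suffices to check one representative of the unique equivalence class; the Hamming code with parity-check matrix whose columns are the five projective points of $\mathrm{PG}(1,4)$ has $G\overline{G}^T$ singular.) The paper sidesteps both $n=4$ and $n=5$ by citing~\cite{LLGF} for $d_4(4,2)=d_4(5,3)=2$, and then applies Lemma~\ref{lem:n-i} for $n\ge 6$; your more self-contained route is fine in spirit but needs the $n=5$ case repaired.

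A minor slip in the lower-bound paragraph: the rows of your $G=(I_{n-2}\mid B)$ have weight $3$, not $2$. The weight-$2$ codewords come from sums of two rows (the $B$-parts cancel), which is what actually gives $d=2$.
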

\begin{proof}
It is known that $d_4(3,1)=3$,
$d_4(4,2)=2$ and $d_4(5,3)=2$~\cite{LLGF}.
If $n \ge 6$, then $d_4(n,n-2)=2$ by Lemma~\ref{lem:n-i}.
\end{proof}

\begin{prop}\label{prop:n-3}
\[
d_4(n,n-3)=
\begin{cases}
3& \text{ if } 4 \le n \le 18, \\
2& \text{ if } n \ge 19.\\
\end{cases}
\]
\end{prop}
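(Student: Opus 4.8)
The plan is to split the proof into the upper-bound and lower-bound directions, mirroring the structure of Proposition~\ref{prop:n-2}. For the regime $n \ge 19$, Lemma~\ref{lem:n-i} with $i=3$ does most of the work: since $\frac{4^3-1}{3} = \frac{63}{3} = 21$, the lemma only gives $d_4(n,n-3)=2$ for $n \ge 22$. So the first thing I would do is close the small gap $n \in \{19,20,21\}$ by an \emph{ad hoc} exhaustive search using the row-by-row construction described at the beginning of Section~\ref{sec:2}: show that there is no Hermitian LCD $[n,n-3,3]_4$ code for $n=19,20,21$, and combine this with the trivial Hermitian LCD $[n,n-3,2]_4$ code from Lemma~\ref{lem:n-i} (whose construction already works for all $n$, giving minimum weight exactly $2$) to conclude $d_4(n,n-3)=2$ for those three values. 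Alternatively, a sphere-packing-type or Griesmer-type argument refined by the Hermitian LCD constraint might rule out $d=3$ for $n \ge 19$ directly; I would first check whether the known bound $d_4(19,7)=9$-style tables in~\cite{LA} and~\cite{LLGF} already record $d_4(19,16)$, $d_4(20,17)$, $d_4(21,18)$, which would let me simply cite them.

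For the regime $4 \le n \le 18$, the lower bound $d_4(n,n-3) \ge 3$ needs a Hermitian LCD $[n,n-3,3]_4$ code for each such $n$. The efficient way is not to build eighteen separate codes but to exhibit a single ``long'' Hermitian LCD code of the right type and then shorten it: find a Hermitian LCD $[18,15,3]_4$ code $E_{18}$ (via the exhaustive construction, reported as an explicit generator matrix $(I_{15} \mid N_{18})$ in a figure), verify $E_{18}$ is Hermitian LCD and has minimum weight $3$, and then argue that for $4 \le n \le 17$ an appropriate iterated shortening $S(\cdots S(E_{18}, i_1), \ldots)$ remains Hermitian LCD with minimum weight at least $3$ — checking Hermitian LCD-ness of each shortened code by the criterion that $G\overline{G}^T$ is nonsingular, exactly as was done for $C_{14}$ and $C_{19}$. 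One must be slightly careful: shortening a Hermitian LCD code need not stay Hermitian LCD in general, so the coordinates $i_j$ must be chosen with care, and it may be cleaner to simply list an explicit Hermitian LCD $[n,n-3,3]_4$ code for each $n$ with $4 \le n \le 18$, or cite the small-length tables of~\cite{LLGF} for the smallest cases (e.g. $d_4(4,1)$, $d_4(5,2)$, $d_4(6,3)=3$) and only run the search for the middle range.

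The upper bound $d_4(n,n-3) \le 3$ for $n \le 18$ — equivalently $d_4(n,n-3) < 4$ — should follow from a bound on ordinary (not necessarily LCD) $[n,n-3]_4$ codes combined with the Griesmer or sphere-packing bound: a $[n,n-3,4]_4$ code would force $n$ to be small, and for $n$ in the relevant range one checks $d=4$ is already impossible for \emph{any} $[n,n-3]_4$ code, so a fortiori for Hermitian LCD ones. Actually the cleaner statement is that $d \le 4$ requires $n$ bounded by the Griesmer-type bound $g_4(n-3,4) \le n$; resolving exactly where the codimension-$3$ Hamming-type bound crosses $d=3$ versus $d=4$ pins down the threshold. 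I expect the main obstacle to be precisely the boundary cases $n = 18$ and $n = 19$: at $n=18$ one needs both that a Hermitian LCD $[18,15,3]_4$ code exists (a finite search, but one must actually run it) and that no $[18,15,4]_4$ code exists at all; at $n=19,20,21$ one needs the nonexistence of a Hermitian LCD $[n,n-3,3]_4$ code, which is the genuinely new exhaustive computation not covered by Lemma~\ref{lem:n-i}. Everything else is either Lemma~\ref{lem:n-i}, a shortening argument of the type already used in the paper, or a citation to the tables in~\cite{LA} and~\cite{LLGF}.
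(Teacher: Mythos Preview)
Your proposal is correct and matches the paper's proof almost exactly: the paper cites \cite{LLGF} for $d_4(n,n-3)=3$ when $4\le n\le 8$, invokes Lemma~\ref{lem:n-i} for $n\ge 22$, uses the fact that the largest minimum weight among \emph{unrestricted} $[n,n-3]_4$ codes is $3$ for $9\le n\le 21$ as the upper bound, runs the exhaustive search to rule out Hermitian LCD $[n,n-3,3]_4$ codes for $n=19,20,21$ (supplying the weight-$2$ codes from the lemma's construction there), and builds an explicit $E_{18}$ which is then shortened---with carefully chosen coordinates, verified one by one---down to $E_9$. Your anticipation that the shortening coordinates must be chosen with care and checked individually is exactly what the paper does.
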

\begin{proof}
It is known that 
$d_4(n,n-3)=3$ for $n=4,5,\ldots,8$~\cite{LLGF}.
If $n \ge 22$, then $d_4(n,n-3)=2$ by Lemma~\ref{lem:n-i}.

It is known that the largest minimum weight among 
(unrestricted) $[n,n-3]_4$ codes is $3$ for $n=9,10,\ldots,21$.
By the approach given in the beginning of the previous section,
our exhaustive search shows that there is no 
Hermitian LCD $[n,n-3,3]_4$ code for $n=19,20,21$.
Let $E_{n}$ $(n=19,20,21)$
be the $[n,n-3]_4$ code with generator matrix
of the form:
\[
\left(
\begin{array}{ccccccccc}
 &           && 1&1&0 \\
 &I_{n-3}&&   \vdots& \vdots & \vdots\\
 &          && 1&1&0& \\
\end{array}
\right).
\]
As described in the proof of Lemma~\ref{lem:n-i},
$E_{n}$ $(n=19,20,21)$ is a Hermitian LCD 
$[n,n-3,2]_4$ code.

Let $E_{18}$ be the $[18,15]_4$ code with generator matrix
$
\left(
\begin{array}{ccccc}
I_{15} &  L_{18}  \\
\end{array}
\right),
$
where
\[
L_{18}^T=
\left(
\begin{array}{ccccccccccccccc}
 0&  0&  0&  1&  1&  1&  1&  1&  1&  1&  1&  1&  1&  1&  1\\
 1&  1&  1&  0&  0&  0&  1&  1&  1&\ww&\ww&\ww&\ww&\vv&\vv\\
 1&\ww&\vv&  1&\ww&\vv&  0&  1&\vv&  0&  1&\ww&\vv&\ww&\vv
\end{array}
\right).
\]
We define the codes $E_{i}$ $(i=17,16,\ldots,9)$ by
the shortened codes as follows:
\begin{multline*}
S(E_{18},1),
S(E_{17},2),
S(E_{16},1),
S(E_{15},4),
S(E_{14},1),
\\
S(E_{13},2),
S(E_{12},1),
S(E_{11},2),
S(E_{10},2),
\end{multline*}
respectively.
We verified that $E_n$
is a Hermitian LCD $[n,n-3,3]_4$ code
for $n=9,10,\ldots,18$.
The result follows.
\end{proof}

\bigskip
\noindent
{\bf Acknowledgment.}
This work was supported by JSPS KAKENHI Grant Numbers 15H03633
and 19H01802.
The author would like to thank the anonymous referee and
the editor Markus Grassl for the useful comments.



\end{document}